\definecolor{codegreen}{rgb}{0,0.6,0}
\definecolor{codegray}{rgb}{0.5,0.5,0.5}
\definecolor{codepurple}{rgb}{0.58,0,0.82}
\definecolor{backcolour}{rgb}{0.95,0.95,0.92}
\crefname{lstlisting}{listing}{listings}
\Crefname{lstlisting}{Listing}{Listings}
\lstdefinestyle{mystyle}{
    backgroundcolor=\color{backcolour},   
    commentstyle=\color{codegreen},
    keywordstyle=\color{magenta},
    numberstyle=\tiny\color{codegray},
    stringstyle=\color{codepurple},
    basicstyle=\ttfamily\footnotesize,
    breakatwhitespace=false,         
    breaklines=true,                 
    captionpos=b,                    
    keepspaces=true,                 
    numbers=left,                    
    numbersep=5pt,                  
    showspaces=false,                
    showstringspaces=false,
    showtabs=false,                  
    tabsize=2
}
\newcommand{\tr}{\operatorname{tr}}
\newcommand{\mat}{\mathbf}
\newcommand{\omg}{\bm{\omega}}
\newcommand{\bmu}{\bm{\mu}}
\newcommand{\bnu}{\bm{\nu}}
\newcommand{\qsubu}{\mat{Q}_{(\mat{u})}}
\newcommand{\LOO}{\texttt{\textup{LeaveOneOut}}}
\newcommand{\XTRACE}{\texttt{\textup{XTrace}}}
\newcommand{\XTF}{\texttt{\textup{XTraceFull}}}
\newcommand{\LOOF}{\texttt{\textup{LeaveOneOutFull}}}
\DeclareMathOperator{\rank}{rank}
\DeclareMathOperator{\orth}{orth}
\DeclareMathOperator{\est}{est}
\DeclareMathOperator{\var}{Var}
\title{Two Variations on the XTrace Algorithm}
\author{Eric Hallman\thanks{Contact: \href{eric.r.hallman@gmail.com}{\texttt{eric.r.hallman@gmail.com}}}}
\date{\today}
\begin{document}

\maketitle
    \begin{changemargin}{1cm}{1cm}
    \begingroup
    \footnotesize
    {\bf Abstract}: This paper studies two potential modifications of \XTRACE{} (Epperly et al., SIMAX 45(1):1-23, 2024), a randomized algorithm for estimating the trace of a matrix. The first is a variance reduction step that averages the output of \XTRACE{} over right-multiplications of the test vectors by random orthogonal matrices. The second is to form a low-rank approximation to the matrix using the whole Krylov space produced by the test vectors, rather than the output of a single power iteration as is used by \XTRACE{}. Experiments on synthetic data show that the first modification offers only slight benefits in practice, while the second can lead to significant improvements depending on the spectrum of the matrix.
    \endgroup
    \end{changemargin}

\section{Introduction}
We'd like to estimate the trace of a matrix $\mat{A}\in\mathbb{R}^{N\times N}$ which can be accessed only via matrix-vector products (matvecs) $\omg\mapsto \mat{A}\omg$. One simple method is to use the Girard-Hutchinson estimator \cite{girard1987algorithme}: if $\omg$ satisfies $\mathbb{E}[\omg\omg^T] = \mat{I}_N$, then 
    \begin{equation*}
        \mathbb{E}[\omg^T\mat{A}\omg] = \mathbb{E}[\tr (\mat{A}\omg\omg^T)] = \tr (\mat{A}\cdot \mathbb{E}[\omg\omg^T]) = \tr(\mat{A}).
    \end{equation*} 
More sophisticated methods reduce the variance of the estimator by constructing a low-rank approximation $\mat{A}\approx \mat{Q}\mat{Q}^T\mat{A}$ and using the identity
    \begin{equation*}
        \tr(\mat{A}) = \tr (\mat{Q}^T\mat{A}\mat{Q}) + \mathbb{E}[\omg^T(\mat{I}-\mat{Q}\mat{Q}^T)\mat{A}(\mat{I}-\mat{Q}\mat{Q}^T)\omg].
    \end{equation*}
For a little extra variance reduction, Epperly et al.~\cite{epperly2024xtrace} recommend a normalization step: first, draw the vectors from a spherically symmetric distribution such as $\omg \sim \mathcal{N}(\mat{0},\mat{I}_N)$. Then, rescale the vectors $(\mat{I}-\mat{Q}\mat{Q}^T)\omg$ to have norm $\sqrt{N-\rank(\mat{Q})}$.  

Meyer et al.~\cite{meyer2021hutch} showed that when $\mat{A}$ is positive definite, it suffices to compute $\mat{Q}$ from a single power iteration $\mat{Q}=\orth(\mat{A}\mat{\Omega})$. However, to na\"{i}vely use the same set of test vectors $\mat{\Omega}$ for both the low-rank approximation $\mat{Q}$ and the Girard-Hutchinson estimator would produce a biased estimate. One solution is to use different sets of test vectors $\mat{\Omega}_1$ and $\mat{\Omega}_2$ for each step: in \cite[Alg.~1]{meyer2021hutch} $\mat{\Omega}_1$ and $\mat{\Omega}_2$ are set to have the same number of columns, and Persson et al.~\cite{persson2022improved} propose an adaptive method to more efficiently allocate the test vectors.

Epperly et al.~\cite{epperly2024xtrace} made the key insight that no such compromise is needed. Their \XTRACE{} estimator uses all of the test vectors for both low-rank approximation and the Girard-Hutchinson estimator, using a leave-one-out technique to ensure that the estimator remains unbiased. Algorithm \ref{alg:xtrace_naive} provides a simple implementation; the authors also show how to compute each of the bases $\mat{Q}_{(i)}$ efficiently so that, for $m$ test vectors, the overall method requires only $\mathcal{O}(m^2N)$ operations besides the matvecs $\mat{\Omega}\mapsto \mat{A}\mat{\Omega}$. 

\begin{algorithm}[ht]
    \caption{XTrace: na\"{i}ve implementation (with normalization) \cite{epperly2024xtrace}}\label{alg:xtrace_naive}
    \fontsize{10}{14}\selectfont
\begin{algorithmic}[1]
\REQUIRE{Matrix $\mat{A}\in \mathbb{R}^{N\times N}$; number of test vectors $m$}
\ENSURE{Approximation to $\tr(\mat{A})$}
\STATE{Sample i.i.d.~$\omg_1,\ldots,\omg_{m} \sim \mathcal{N}(\mat{0},\mat{I}_N)$}
\STATE{$\mat{Y} = \mat{A}\mat{\Omega}$}\hfill\COMMENT{$\mat{\Omega}=[\omg_1,\ldots,\omg_{m}]$}
\FOR{$i=1,2,\ldots,m$}
\STATE{Compute an orthonormal basis $\mat{Q}_{(i)}$ for $\mat{Y}_{-i}$}\hfill\COMMENT{Remove $i$th column of $\mat{Y}$}
\STATE{$\bmu = (\mat{I}-\mat{Q}_{(i)}\mat{Q}_{(i)}^T)\omg_i$}
\STATE{$\bnu = \sqrt{N-\rank(\mat{Q}_{(i)})}\cdot\bmu/\|\bmu\|_2$}\hfill\COMMENT{Normalization}
\STATE{$\widehat{\tr}_i = \tr(\mat{Q}_{(i)}^T\mat{A}\mat{Q}_{(i)}) + \bnu^T\mat{A}\bnu$}
\ENDFOR
\RETURN{$\frac{1}{m}\sum_{i=1}^{m}\widehat{\tr}_i$}
\end{algorithmic}
\end{algorithm}

\phantomsection
\subsection{Contributions}\label{sec:contributions}
This paper proposes two modifications to the \XTRACE{} estimator that will further reduce its variance without adding bias.
\begin{enumerate}
    \item Average the output of \XTRACE{} over multiple random rotations $\mat{\Omega}\mapsto \mat{\Omega}\mat{U}$.
    \item Orthogonalize each $\omg_i$ against $[(\mat{A}\mat{\Omega})_{-i},\, \mat{\Omega}_{-i}]$, not just $(\mat{A}\mat{\Omega})_{-i}$.
\end{enumerate}
Neither of these proposals requires any additional matvecs with $\mat{A}$. 

We also consider the more general question: what invariance properties must an optimal trace estimator have? For the purposes of this paper, a trace estimator $\est_{\mat{A}}(\cdot)$ could be called ``optimal'' if it is admissible within the class
\begin{equation} \label{def:class_for_admissibility}
    \mathcal{F} := \{g(\mat{\Omega}, \mat{A}\mat{\Omega},\mat{A}^2\mat{\Omega})\ :\ \mathbb{E}[g(\mat{\Omega}, \mat{A}\mat{\Omega},\mat{A}^2\mat{\Omega})] = \tr(\mat{A})\}.
\end{equation}
That is, $\est_{\mat{A}}(\cdot)\in \mathcal{F}$, and there is no other estimator $\est_{\mat{A}}'(\cdot)$ satisfying $\var(\est_{\mat{A}}'(\mat{\Omega})) \leq \var(\est_{\mat{A}}(\mat{\Omega}))$ for all $\mat{A}$, with strict inequality for at least one $\mat{A}$. 


We observe that such an estimator must be {\em basis-invariant} in that it can be expressed as a function of $\mathcal{R}(\mat{\Omega})$, and we show in \cref{thm:xtrace_full_basis_invariance} that our modified algorithm has certain invariance properties that \XTRACE{} does not.

\begin{algorithm}[ht]
    \caption{XTrace-Full: na\"{i}ve implementation}\label{alg:xtrace_full_naive}
    \fontsize{10}{14}\selectfont
\begin{algorithmic}[1]
\REQUIRE{Matrix $\mat{A}\in \mathbb{R}^{N\times N}$; number of test vectors $m$}
\ENSURE{Approximation to $\tr(\mat{A})$}
\STATE{Sample i.i.d.~$\omg_1,\ldots,\omg_{m} \sim \mathcal{N}(\mat{0},\mat{I}_N)$}
\STATE{$\mat{Y} = \mat{A}\mat{\Omega}$}\hfill\COMMENT{$\mat{\Omega}=[\omg_1,\ldots,\omg_{m}]$}
\FOR{$i=1,2,\ldots,m$}
\STATE{Compute an orthonormal basis $\mat{Q}_{(i)}$ for $[\mat{Y}_{-i},\,\mat{\Omega}_{-i}]$}
\STATE{$\bmu = (\mat{I}-\mat{Q}_{(i)}\mat{Q}_{(i)}^T)\omg_i$}
\STATE{$\bnu = \sqrt{N-\rank(\mat{Q}_{(i)})}\cdot\bmu/\|\bmu\|_2$}\hfill\COMMENT{Normalization}
\STATE{$\widehat{\tr}_i = \tr(\mat{Q}_{(i)}^T\mat{A}\mat{Q}_{(i)}) + \bnu^T\mat{A}\bnu$}
\ENDFOR
\RETURN{$\frac{1}{m}\sum_{i=1}^{m}\widehat{\tr}_i$}
\end{algorithmic}
\end{algorithm}

\phantomsection
\subsection{Related work and commentary on optimality conditions}
It is worth mentioning that a variety of trace estimators exist that are not in the class of functions defined in \cref{def:class_for_admissibility}.

First, we could permit biased estimators, comparing them by mean-squared error instead of variance. One such estimator follows from the Variance-reduced Stochastic Lanczos Quadrature proposed by Bhattacharjee et al.~\cite[Thm.~5]{bhattacharjee2025improved}, which constructs a Krylov space and deflates sufficiently-converged eigenvalues, resulting in a very small but nonzero bias. More generally, when computing the trace of some matrix function such as $f(\mat{A}) = \mat{A}^{-1}$ or $\exp(\mat{A})$, it is common to approximate the function using Chebyshev polynomials or quadrature. Truly unbiased estimators exist \cite{rhee2015unbiased}, but have not gotten much attention in the literature on trace estimation. 

Second, the requirement that $\mat{A}$ be accessible strictly through matvecs precludes probing methods, which in addition to applying matvecs will partition sparse matrices using graph-coloring algorithms (see \cite{frommer2021analysis} and the references therein). It also precludes multilevel methods such as in Frommer et al.~\cite{frommer2022multigrid}, which are effective when $f(\mat{A})$ can be approximated using a multigrid hierarchy. 

Third, we could consider adaptive methods for choosing how to apply the matvecs: given a budget on the number of matvecs, matrix loads, and memory, one could define
\[\est_{\mat{A}}(\mat{\Omega}) = g(\mat{Y}_0, \mat{Y}_1,\ldots, \mat{Y}_{q-1}),\]
where $\mat{Y}_0=\mat{\Omega}$, $\mat{Y}_1=\mat{A}\cdot g_1(\mat{Y}_0)$, $\mat{Y}_2 = \mat{A}\cdot g_2(\mat{Y}_0,\mat{Y}_1)$, and so on. This function class would be broad enough to accommodate the adaptive method proposed in \cite{persson2022improved}, or the multi-polynomial method in \cite{hallman2022multilevel} which approximates $f(\mat{A})$ using polynomials of varying degree. 

\phantomsection
\subsection{XTrace is not rotation-invariant}
The \XTRACE{} estimator is symmetric with respect to the test vectors $\omg_1,\ldots,\omg_m$. Epperly et al., citing Halmos \cite{halmos1946theory}, note that any minimum-variance unbiased trace estimator must have this property. However, \XTRACE{} is not invariant under arbitrary rotations $\mat{\Omega}\mapsto \mat{\Omega}\mat{U}$. For a reproducible example, consider the inputs
    \[
    \mat{A}=\begin{bmatrix}
        5 & 0 & 0 & 0 & 0 \\
        0 & 4 & 0 & 0 & 0 \\
        0 & 0 & 3 & 0 & 0 \\
        0 & 0 & 0 & 2 & 0 \\
        0 & 0 & 0 & 0 & 1 \\
    \end{bmatrix},\quad
    \mat{\Omega}=\begin{bmatrix}
        1 & 0 \\
        0 & 1/2 \\
        0 & 1/2 \\
        0 & 1/2 \\
        0 & 1/2 \\
    \end{bmatrix}, \quad \mat{U}_\theta=\begin{bmatrix}
        \cos\theta & -\sin\theta \\
        \sin\theta & \cos\theta
    \end{bmatrix}.
    \]
    \cref{fig:xtrace_not_invariant} shows the output of \cref{alg:xtrace_naive} when the random samples are replaced by $\mat{\Omega}\mat{U}_\theta$. As $\theta$ ranges over $[0,\pi/2]$, the trace estimate oscillates. Thus by taking the average output over $\theta$, we could obtain an estimator with lower variance.
\begin{figure}
    \centering
    \includegraphics[width=0.6\linewidth]{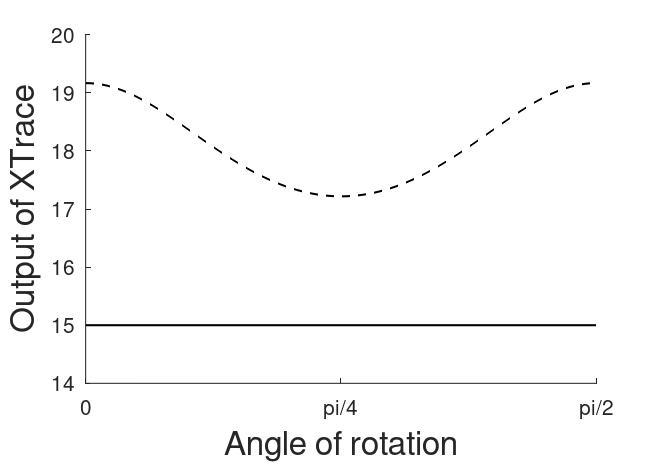}
    \caption{The output of \XTRACE{} is not invariant under arbitrary rotations $\mat{\Omega}\mapsto \mat{\Omega}\mat{U}$.}
    \label{fig:xtrace_not_invariant}
\end{figure}

\phantomsection
\subsection{Organization}
The rest of this paper is organized as follows: \cref{sec:basis_invariant} discusses the invariance properties of various trace estimators and proposes our modified estimator \XTF{} (\cref{alg:xtrace_full_naive}). \Cref{sec:practical_implementation} derives an efficient implementation (\cref{alg:xtrace_full_sampling}). \Cref{sec:numerical_experiments} presents the results of some numerical experiments. \Cref{sec:concluding_remarks} offers concluding remarks and some avenues for future work.

\section{Notation}
For a matrix $\mat{X}$, $\mat{X}_{-i}$ denotes $\mat{X}$ but with the $i$th column removed and $\mathcal{R}(\mat{X})$ denotes the column space of $\mat{X}$. The vector $\mat{e}_i$ is the $i$th standard basis vector. $\mat{X}^{\dagger}$ is the Moore-Penrose inverse, equal to $(\mat{X}^T\mat{X})^{-1}\mat{X}^T$ when $\mat{X}$ has full column rank. $\mat{X}\otimes \mat{Y}$ is the Kronecker product of $\mat{X}$ and $\mat{Y}$, having $(i,j)$ block entry equal to $\mat{X}_{ij}\mat{Y}$. 

A ``random orthogonal'' matrix always means one sampled from the Haar distribution. $\mathcal{N}(\mat{0},\mat{I}_N)$ is the standard multivariate Gaussian distribution, and $\chi_{N}^2$ is the chi-squared distribution with $N$ degrees of freedom.

\section{Basis-invariant estimators} \label{sec:basis_invariant}
Let $\est_{\mat{A}}(\mat{\Omega})$ be a trace estimator with finite variance. The law of total variance states that
\begin{equation*}
    \var(\est_{\mat{A}}(\mat{\Omega})) =
    \mathbb{E}[\var(\est_{\mat{A}}(\mat{\Omega})|\mathcal{R}(\mat{\Omega}))]
    + \var\left(\mathbb{E}[\est_{\mat{A}}(\mat{\Omega})|\mathcal{R}(\mat{\Omega})]\right),
\end{equation*}
and the law of total expectation states that
\begin{equation*}
    \mathbb{E}[\mathbb{E}[\est_{\mat{A}}(\mat{\Omega})|\mathcal{R}(\mat{\Omega})]] = \mathbb{E}[\est_{\mat{A}}(\mat{\Omega})].
\end{equation*}
Therefore, by conditioning on $\mathcal{R}(\mat{\Omega})$ and taking expectations, we can obtain an estimator with the same expectation and lesser-or-equal variance. Such an estimator is also {\em basis-invariant}: it depends only on $\mathcal{R}(\mat{\Omega})$, and transformations $\mat{\Omega}\mapsto \mat{\Omega}\mat{B}$ will not affect the output for any invertible matrix $\mat{B}$. 

When $\mat{\Omega}$ has i.i.d.~standard Gaussian entries, we can use a standard result about the distributions of such matrices \cite{Muirhead1982AspectsOM} to express the conditional expectation in a cleaner form.

\begin{lemma}\label{lemma:gaussian_qr}
    If $\mat{\Omega}\in \mathbb{R}^{N\times m}$ has i.i.d.~$\mathcal{N}(0,1)$ entries, then it has distribution equal to a product of three independent terms $\mat{Q}\mat{U}\mat{R}$, where 
    \begin{itemize}
        \item $\mat{Q}\in \mathbb{R}^{N\times m}$ has orthonormal columns and is Haar-distributed on the Stiefel manifold $V_m(\mathbb{R}^N)$;
        \item $\mat{U}\in \mathbb{R}^{m\times m}$ is orthogonal and Haar-distributed;
        \item $\mat{R}$ is upper triangular with independent entries $\mat{R}_{ii}\sim \chi_{N-i+1}^2$, $\mat{R}_{ij}\sim \mathcal{N}(0,1)$, and consequently $\mathbb{E}[\mat{R}^T\mat{R}]=N\cdot \mat{I}_m$.
    \end{itemize}
    Furthermore, conditioning on $\mathcal{R}(\mat{\Omega})$ is equivalent to conditioning on $\mat{Q}$. 
\end{lemma}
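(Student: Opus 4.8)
The plan is to build the factorization in two stages, leaning on the classical QR/Bartlett description of a Gaussian matrix and then refining its orthonormal factor. First I would invoke the standard result cited from Muirhead \cite{Muirhead1982AspectsOM}: writing the thin QR factorization $\mat{\Omega}=\mat{G}\mat{R}$ with $\mat{R}$ upper triangular and positive on the diagonal, the two factors are independent, $\mat{G}$ is Haar on $V_m(\mathbb{R}^N)$, and $\mat{R}$ is exactly the Bartlett factor with $\mat{R}_{ii}\sim\chi_{N-i+1}$ and $\mat{R}_{ij}\sim\mathcal{N}(0,1)$ for $i<j$, all independent. (The entrywise law of $\mat{R}$ is what the lemma records; for the moment identity to hold I read the diagonal claim as $\mat{R}_{ii}^2\sim\chi^2_{N-i+1}$ rather than $\mat{R}_{ii}\sim\chi^2_{N-i+1}$.) This already supplies the factor $\mat{R}$ and the product $\mat{Q}\mat{U}=\mat{G}$; the remaining task is to split $\mat{G}$ into a column-space part and an independent rotation.

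Next I would factor $\mat{G}=\mat{Q}\mat{U}$. I would fix once and for all a measurable section $\mat{Q}_0\colon \grassmannian(m,\mathbb{R}^N)\to V_m(\mathbb{R}^N)$ choosing one orthonormal basis per subspace, then set $\mat{Q}:=\mat{Q}_0(\mathcal{R}(\mat{G}))$ and $\mat{U}:=\mat{Q}^{\T}\mat{G}\in O(m)$, so that $\mat{G}=\mat{Q}\mat{U}$ holds identically. The crux is that $\mat{U}$ is Haar on $O(m)$ and independent of $\mathcal{R}(\mat{G})$, hence of $\mat{Q}$: this comes from the right $O(m)$-invariance of the Haar measure on $V_m(\mathbb{R}^N)$, which disintegrates over the Grassmannian as the invariant measure on $\grassmannian(m,\mathbb{R}^N)$ times Haar on the $O(m)$-fiber. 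Since $\mat{G}\perp\mat{R}$ from the first stage and $\mat{Q},\mat{U}$ are functions of $\mat{G}$, the three factors $\mat{Q},\mat{U},\mat{R}$ are then jointly independent. One honest caveat worth flagging in the writeup: with this construction it is the product $\mat{Q}\mat{U}=\mat{G}$ that is Haar on $V_m(\mathbb{R}^N)$, whereas $\mat{Q}$ itself is the column-space representative; this is precisely the reading under which the final sentence of the lemma is literally correct.

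Two short verifications then close the argument. For the second moment, upper-triangularity gives $(\mat{R}^{\T}\mat{R})_{jj}=\mat{R}_{jj}^2+\sum_{i<j}\mat{R}_{ij}^2$ with expectation $(N-j+1)+(j-1)=N$, while for $j\neq k$ every cross term $\mathbb{E}[\mat{R}_{ij}\mat{R}_{ik}]$ vanishes by independence and the zero mean of the strictly-upper Gaussians, so $\mathbb{E}[\mat{R}^{\T}\mat{R}]=N\,\mat{I}_m$. For the conditioning claim, $\mathcal{R}(\mat{\Omega})=\mathcal{R}(\mat{Q})$ because $\mat{U}\mat{R}$ is invertible almost surely, while $\mat{Q}=\mat{Q}_0(\mathcal{R}(\mat{\Omega}))$ is by construction a measurable function of $\mathcal{R}(\mat{\Omega})$; hence $\sigma(\mat{Q})=\sigma(\mathcal{R}(\mat{\Omega}))$ and the two conditional expectations coincide.

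The step that needs genuine care — and where I expect the real work to sit — is the disintegration underlying $\mat{G}=\mat{Q}\mat{U}$: producing the measurable section $\mat{Q}_0$ and proving that the fiber coordinate $\mat{U}=\mat{Q}_0(\mathcal{R}(\mat{G}))^{\T}\mat{G}$ is Haar on $O(m)$ and independent of the base point $\mathcal{R}(\mat{G})$; everything else is either quoted from Muirhead or a one-line computation. A clean way to discharge it is to note that for every fixed $\mat{V}\in O(m)$ one has $\mat{G}\mat{V}\overset{d}{=}\mat{G}$ while $\mathcal{R}(\mat{G}\mat{V})=\mathcal{R}(\mat{G})$; thus conditioning on the column space leaves the fiber coordinate invariant under a transitive right $O(m)$-action, which forces its conditional law to be Haar and free of the base point.
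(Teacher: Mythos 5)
First, a point of comparison: the paper never actually proves \cref{lemma:gaussian_qr}; it states it as a known consequence of the Bartlett/QR decomposition and attributes it to Muirhead. So your writeup is being measured against a citation rather than a competing argument. Within that frame, much of what you do is right: the two-factor decomposition $\mat{\Omega}=\mat{G}\mat{R}$ with independent Haar Stiefel factor and Bartlett triangular factor is the correct starting point; your reading of the diagonal law as $\mat{R}_{ii}^2\sim\chi^2_{N-i+1}$ is the only one compatible with $\mathbb{E}[\mat{R}^T\mat{R}]=N\mat{I}_m$, so the statement as printed is indeed a typo that you correctly repair; and both your moment computation and your right-$O(m)$-invariance argument (that the fiber coordinate $\mat{U}=\mat{Q}^T\mat{G}$ is conditionally Haar, hence Haar and independent of the base point) are sound.

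The genuine deviation is in your split $\mat{G}=\mat{Q}\mat{U}$. By defining $\mat{Q}$ through a measurable section of the Grassmannian you force $\sigma(\mat{Q})=\sigma(\mathcal{R}(\mat{\Omega}))$, but you also concentrate the law of $\mat{Q}$ on the image of that section, a Haar-null subset of $V_m(\mathbb{R}^N)$; so the first bullet of the lemma --- that $\mat{Q}$ is Haar-distributed on the Stiefel manifold --- is false for your $\mat{Q}$. You flag this honestly, but you present it as an unavoidable tension in the statement, and it is not: the fix is simpler than the disintegration you build. Draw $\mat{U}$ Haar on $O(m)$ independently of $(\mat{G},\mat{R})$ and set $\mat{Q}:=\mat{G}\mat{U}^T$. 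Right-invariance of the Haar measure on $V_m(\mathbb{R}^N)$ shows that the conditional law of $\mat{Q}$ given $\mat{U}$ is Haar for every value of $\mat{U}$, hence $\mat{Q}$ is Haar and independent of $\mat{U}$; joint independence of $(\mat{Q},\mat{U},\mat{R})$ follows since $\mat{U}\perp(\mat{G},\mat{R})$ and $\mat{G}\perp\mat{R}$, and $\mat{\Omega}=\mat{Q}\mat{U}\mat{R}$ by construction. All three bullets then hold simultaneously, and the final sentence holds in the sense in which the paper actually uses it (in \cref{thm:hutch_basis_invariance,thm:xtrace_full_basis_invariance}): equality of conditional expectations, not of sigma-algebras. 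Concretely, $\mathbb{E}[f(\mat{\Omega})\mid\mat{Q}]=\int f(\mat{Q}\mat{U}\mat{R})\,d\mu(\mat{U})\,d\nu(\mat{R})$ (with $\mu$ the Haar and $\nu$ the Bartlett law) is invariant under $\mat{Q}\mapsto\mat{Q}\mat{V}$ by Haar invariance of $\mu$, hence is a function of $\mathcal{R}(\mat{\Omega})$ alone, and the tower property then gives $\mathbb{E}[f(\mat{\Omega})\mid\mathcal{R}(\mat{\Omega})]=\mathbb{E}[f(\mat{\Omega})\mid\mat{Q}]$. In short: your argument proves a true and usable variant of the lemma, and the downstream theorems would survive with it, but it does not prove the lemma as stated; randomizing the basis rather than fixing it by a section repairs the first bullet at no cost and with less machinery.
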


Thus in principle, we could always estimate $\mathbb{E}[\est_{\mat{A}}(\mat{\Omega})|\mathcal{R}(\mat{\Omega})]$ by fixing an orthonormal basis $\mat{Q}$ for $\mathcal{R}(\mat{\Omega})$ and repeatedly sampling $\est_{\mat{A}}(\mat{Q}\mat{U}\mat{R})$ for random $\mat{U}$ and $\mat{R}$. Depending on the estimator, it may be possible to simplify the expression further. In the case of the plain Girard-Hutchinson estimator, for example, we can compute the conditional expectation exactly.

\begin{theorem}\label{thm:hutch_basis_invariance}
    If $\mat{\Omega}\in \mathbb{R}^{N\times m}$ has i.i.d.~$\mathcal{N}(0,1)$ entries and $\mat{Q}$ is any orthonormal basis for $\mathcal{R}(\mat{\Omega})$, then  
    \begin{equation*}
        \mathbb{E}\left[\tfrac{1}{m}\tr (\mat{\Omega}^T\mat{A}\mat{\Omega})|\mathcal{R}(\mat{Q})\right] = \tfrac{N}{m}\tr(\mat{Q}^T\mat{A}\mat{Q}).
    \end{equation*}
\end{theorem}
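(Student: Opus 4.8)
The plan is to substitute the factorization from \cref{lemma:gaussian_qr} and then integrate out the two factors that are independent of $\mathcal{R}(\mat{\Omega})$ one at a time. I would write $\mat{\Omega}=\mat{Q}\mat{U}\mat{R}$ as in the lemma, where the three factors are jointly independent. Since conditioning on $\mathcal{R}(\mat{\Omega})$ is the same as conditioning on $\mat{Q}$, under this conditioning $\mat{U}$ and $\mat{R}$ retain their unconditional Haar and triangular laws and remain independent of each other. Because $\tr(\mat{Q}^T\mat{A}\mat{Q})$ is unchanged if $\mat{Q}$ is replaced by $\mat{Q}\mat{W}$ for any orthogonal $\mat{W}$, the right-hand side does not depend on which orthonormal basis is chosen, so I may take $\mat{Q}$ to be exactly the factor appearing in the lemma. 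Abbreviating $\mat{M}:=\mat{Q}^T\mat{A}\mat{Q}$, the quantity to be averaged becomes
\begin{equation*}
  \tr(\mat{\Omega}^T\mat{A}\mat{\Omega}) = \tr(\mat{R}^T\mat{U}^T\mat{M}\mat{U}\mat{R}).
\end{equation*}

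I would then integrate over $\mat{U}$ first. Holding $\mat{R}$ fixed and using linearity of the trace, the $\mat{U}$-average reduces to computing $\mathbb{E}_{\mat{U}}[\mat{U}^T\mat{M}\mat{U}]$ for Haar-distributed $\mat{U}$. The key identity is
\begin{equation*}
  \mathbb{E}_{\mat{U}}[\mat{U}^T\mat{M}\mat{U}] = \tfrac{1}{m}\tr(\mat{M})\,\mat{I}_m,
\end{equation*}
which gives $\mathbb{E}_{\mat{U}}[\tr(\mat{R}^T\mat{U}^T\mat{M}\mat{U}\mat{R})] = \tfrac{1}{m}\tr(\mat{M})\,\tr(\mat{R}^T\mat{R})$. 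Averaging the surviving factor over $\mat{R}$ and using $\mathbb{E}[\mat{R}^T\mat{R}]=N\,\mat{I}_m$ from \cref{lemma:gaussian_qr}, so that $\mathbb{E}[\tr(\mat{R}^T\mat{R})]=Nm$, yields $N\,\tr(\mat{M})$. Dividing by $m$ produces $\tfrac{N}{m}\tr(\mat{Q}^T\mat{A}\mat{Q})$, as claimed.

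The one step that genuinely requires justification is the Haar-averaging identity. I would prove it by a symmetry argument: by right-invariance of Haar measure, the substitution $\mat{U}\mapsto\mat{U}\mat{V}$ for a fixed orthogonal $\mat{V}$ shows that $\mat{B}:=\mathbb{E}_{\mat{U}}[\mat{U}^T\mat{M}\mat{U}]$ satisfies $\mat{V}^T\mat{B}\mat{V}=\mat{B}$ for every orthogonal $\mat{V}$, equivalently $\mat{B}\mat{V}=\mat{V}\mat{B}$. Since the defining representation of the orthogonal group on $\mathbb{R}^m$ is irreducible, Schur's lemma forces $\mat{B}=c\,\mat{I}_m$, and taking the trace gives $c=\tfrac{1}{m}\tr(\mat{M})$. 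I would note that this argument does not require $\mat{M}$ (equivalently $\mat{A}$) to be symmetric, since the symmetric and antisymmetric parts of $\mat{M}$ are handled uniformly. This is the only real obstacle; everything else is bookkeeping with the trace together with the first and second moments supplied by \cref{lemma:gaussian_qr}.
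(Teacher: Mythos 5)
Your proof is correct and takes essentially the same route as the paper: both substitute the factorization $\mat{\Omega}=\mat{Q}\mat{U}\mat{R}$ from \cref{lemma:gaussian_qr}, reduce the conditional expectation to a Haar average of the form $\mathbb{E}[\mat{U}^T\mat{M}\mat{U}]=\tfrac{1}{m}\tr(\mat{M})\mat{I}_m$, and finish with $\mathbb{E}[\tr(\mat{R}^T\mat{R})]=Nm$. The only (cosmetic) difference is that the paper cycles the trace so the identity is applied to the symmetric matrix $\mat{R}\mat{R}^T$, whereas you apply it to $\mat{Q}^T\mat{A}\mat{Q}$ and supply the invariance/Schur argument showing symmetry is not actually needed---a nice touch, but the same underlying computation.
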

\begin{proof}
    By \cref{lemma:gaussian_qr} and the law of total expectation, we find that
    \begin{align*}
    \mathbb{E}\left[\tfrac{1}{m}\tr (\mat{\Omega}^T\mat{A}\mat{\Omega})|\mathcal{R}(\mat{\Omega})\right]
        &= \tfrac{1}{m}\mathbb{E}[\tr (\mat{R}^T\mat{U}^T\mat{Q}^T\mat{A}\mat{Q}\mat{U}\mat{R})|\mat{Q}] \\
        &= \tfrac{1}{m}\tr(\mat{Q}^T\mat{A}\mat{Q}\cdot\mathbb{E}[\mat{U}\mat{R}\mat{R}^T\mat{U}^T])\\
        &= \tfrac{1}{m}\tr(\mat{Q}^T\mat{A}\mat{Q}\cdot\mathbb{E}[\mathbb{E}[\mat{U}\mat{R}\mat{R}^T\mat{U}^T|\mat{R}]])\\
        &= \tfrac{1}{m}\tr\left(\mat{Q}^T\mat{A}\mat{Q}\cdot \mathbb{E}\left[\tfrac{1}{m}\tr(\mat{R}\mat{R}^T)\mat{I}_m\right]\right)\\
        &= \tfrac{1}{m^2}\tr\left(\mat{Q}^T\mat{A}\mat{Q} \right)\cdot \mathbb{E}\left[\tr(\mat{R}^T\mat{R})\right]\\
        &= \tfrac{1}{m^2}\tr\left(\mat{Q}^T\mat{A}\mat{Q}\right)\cdot\mathbb{E}\left[\|\mat{\Omega}\|_F^2\right]\\
        &= \tfrac{1}{m^2}\tr\left(\mat{Q}^T\mat{A}\mat{Q}\right)\cdot mN\\
        &= \tfrac{N}{m}\tr(\mat{Q}^T\mat{A}\mat{Q}).
\end{align*}
In the above, we used the fact that $\mathbb{E}[\mat{U}\mat{M}\mat{U}^T] = \tfrac{1}{m}\tr(\mat{M})\mat{I}_m$ for symmetric $\mat{M}$, and the linear and cyclic properties of the trace operator.
\end{proof}
It is already generally known that first orthogonalizing $\mat{\Omega}$ slightly reduces the variance of the Girard-Hutchinson estimator; this proof frames that result in terms of conditional expectations.

\phantomsection
\subsection{Application to XTrace}
For \XTRACE{}, on the other hand, some numerical experiments suggest that 
\begin{equation}
    \mathbb{E}[\XTRACE_{\mat{A}}(\mat{Q}\mat{U}\mat{R})|\mat{Q})] \neq \mathbb{E}[\XTRACE_{\mat{A}}(\mat{Q}\mat{U})|\mat{Q})]
\end{equation}
in general. If so, then there is no result for \XTRACE{} exactly analogous to \cref{thm:hutch_basis_invariance} which by taking expectations removes the dependence on $\mat{R}$ entirely. 

There is, however, an analogous result for \XTF{}. To derive it, we introduce \cref{alg:leave_one_out_full} as a computational primitive called \LOOF{}. It uses the first $m-1$ columns of $\mat{\Omega}$ for deflation, and reserves the final column for the Girard-Hutchinson estimator. \XTF{} is then the average of \LOOF{} over all column permutations $\mat{\Omega} \mapsto \mat{\Omega}\mat{\Pi}$. Similarly, \cref{alg:leave_one_out} describes a computational primitive \LOO{} for \XTRACE{}.

\begin{lemma}\label{lemma:loo_u_last_column}
    For orthogonal $\mat{U}$ and fixed $\mat{\Omega}$, the value of the functions
    \begin{align*}
        \mat{U} &\mapsto \LOOF_{\mat{A}}(\mat{\Omega}\mat{U}),\\
        \mat{U} &\mapsto \LOO_{\mat{A}}(\mat{\Omega}\mat{U})
    \end{align*} depend only on the final column of $\mat{U}$. 
\end{lemma}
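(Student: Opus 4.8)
The plan is to follow exactly which quantities produced by \LOOF{} and \LOO{} enter the returned value, and to verify that each is a function of the last column of $\mat{U}$ alone. Write $\mat{U}=[\mat{u}_1,\ldots,\mat{u}_m]$ and $\mat{\Omega}' = \mat{\Omega}\mat{U}$, and recall that both primitives reserve the final column of their input for the Girard--Hutchinson step while using the first $m-1$ columns for deflation. The final column is $\mat{\Omega}'\mat{e}_m = \mat{\Omega}\mat{u}_m$, which manifestly depends only on $\mat{u}_m$, so the whole problem reduces to controlling the deflation subspace.

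The crux is the following observation. The first $m-1$ columns of $\mat{\Omega}'$ are $\mat{\Omega}[\mat{u}_1,\ldots,\mat{u}_{m-1}]$, so their column space is $\{\mat{\Omega}\mat{v} : \mat{v}\in \mathcal{R}([\mat{u}_1,\ldots,\mat{u}_{m-1}])\}$. Because $\mat{U}$ is orthogonal, its columns are orthonormal, and therefore $\mathcal{R}([\mat{u}_1,\ldots,\mat{u}_{m-1}])$ is exactly the orthogonal complement of $\mat{u}_m$ inside $\mathbb{R}^m$. Hence the span of the first $m-1$ columns of $\mat{U}$ is determined entirely by $\mat{u}_m$. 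Applying the fixed linear maps $\mat{\Omega}$ and $\mat{A}\mat{\Omega}$ then shows that $\mathcal{R}(\mat{\Omega}'_{-m})$ and $\mathcal{R}(\mat{A}\mat{\Omega}'_{-m})$ both depend only on $\mat{u}_m$, and consequently so does the deflation subspace $\mathcal{R}(\mat{Q})$ --- spanned by $[\mat{A}\mat{\Omega}'_{-m},\,\mat{\Omega}'_{-m}]$ for \LOOF{} and by $\mat{A}\mat{\Omega}'_{-m}$ for \LOO{}.

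To finish I would check that the output depends on the basis $\mat{Q}$ only through the subspace $\mathcal{R}(\mat{Q})$, equivalently through the orthogonal projector $\mat{Q}\mat{Q}^T$. This is immediate: $\tr(\mat{Q}^T\mat{A}\mat{Q}) = \tr(\mat{A}\mat{Q}\mat{Q}^T)$ by cyclicity, $\rank(\mat{Q}) = \dim\mathcal{R}(\mat{Q})$, and $\bmu = (\mat{I}-\mat{Q}\mat{Q}^T)\mat{\Omega}\mat{u}_m$ with $\bnu$ its rescaling. Every ingredient is thus a function of $\mat{Q}\mat{Q}^T$ together with the last test vector $\mat{\Omega}\mat{u}_m$, both of which are functions of $\mat{u}_m$; assembling them gives $\widehat{\tr} = \tr(\mat{A}\mat{Q}\mat{Q}^T) + \bnu^T\mat{A}\bnu$ as a function of $\mat{u}_m$ alone, for both primitives.

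The main obstacle is not computational but conceptual: one must be careful that an orthogonal $\mat{U}$ does not single out a canonical orthonormal basis for the deflation subspace, so the argument could fail if the output genuinely depended on the choice of $\mat{Q}$ rather than on $\mathcal{R}(\mat{Q})$. The basis-invariance verification of the third paragraph is precisely what rules this out, and it is the step I would state most carefully.
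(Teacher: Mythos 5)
Your proof is correct and follows essentially the same route as the paper: both arguments use the orthogonality of $\mat{U}$ to show that the span of its first $m-1$ columns is exactly the orthogonal complement of $\mat{u}_m$ (the paper encodes this via the identity $\mat{U}(\mat{I}-\mat{e}_m\mat{e}_m^T) = (\mat{I}-\mat{u}\mat{u}^T)\mat{U}$), so that the deflation subspace $\mathcal{R}(\mat{Q})$, and hence $\bmu = (\mat{I}-\mat{Q}\mat{Q}^T)\mat{\Omega}\mat{u}_m$, depend only on $\mat{u}_m$. Your third paragraph simply makes explicit the fact that the returned value depends on $\mat{Q}$ only through the projector $\mat{Q}\mat{Q}^T$, a point the paper leaves implicit.
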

\begin{proof}
    Let $\mat{u}$ be the final column of $\mat{U}$. Since
    \begin{align*}
        \mathcal{R}\left((\mat{A}\mat{\Omega}\mat{U})_{-m}\right)
        &= \mathcal{R}\left(\mat{A}\mat{\Omega}\mat{U}(\mat{I}-\mat{e}_m\mat{e}_m^T)\right) \\
        &= \mathcal{R}\left(\mat{A}\mat{\Omega}(\mat{I}-\mat{u}\mat{u}^T)\mat{U}\right)\\
        &= \mathcal{R}\left(\mat{A}\mat{\Omega}(\mat{I}-\mat{u}\mat{u}^T)\right),
    \end{align*}
    and similarly for $\mathcal{R}\left((\mat{\Omega}\mat{U})_{-m}\right)$, the range of the orthonormal basis $\mat{Q}$ computed by \cref{alg:leave_one_out_full} or \cref{alg:leave_one_out} is determined entirely by $\mat{u}$ for any given $\mat{\Omega}$. After that, the algorithms compute
    \begin{equation*}
        \bmu = (\mat{I}-\mat{Q}\mat{Q}^T)\mat{\Omega}\mat{U}\mat{e}_m = (\mat{I}-\mat{Q}\mat{Q}^T)\mat{\Omega}\mat{u},
    \end{equation*}
    which again depends only on $\mat{u}$.
\end{proof}

\begin{lemma}\label{lemma:loo_r_invariant}
    For any invertible block upper triangular matrix $\mat{R}$ with block sizes $(m-1, 1)$, 
    \begin{equation*}
        \LOOF_{\mat{A}}(\mat{\Omega}) = \LOOF_{\mat{A}}(\mat{\Omega}\mat{R}).
    \end{equation*}
    This result does not hold for \LOO{}. 
\end{lemma}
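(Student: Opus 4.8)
The plan is to follow every quantity computed by \LOOF{} through the substitution $\mat{\Omega}\mapsto\mat{\Omega}\mat{R}$ and verify that none of them changes. Write $\mat{R}$ in the block form
\[
\mat{R} = \begin{bmatrix} \mat{R}_{11} & \mat{r} \\ \mat{0} & \rho \end{bmatrix},
\]
where $\mat{R}_{11}\in\mathbb{R}^{(m-1)\times(m-1)}$, $\mat{r}\in\mathbb{R}^{m-1}$, and $\rho$ is a scalar; invertibility of $\mat{R}$ forces $\mat{R}_{11}$ to be invertible and $\rho\neq 0$. Splitting $\mat{\Omega}=[\mat{\Omega}_{-m},\,\omg_m]$, the first computation is
\[
\mat{\Omega}\mat{R} = \bigl[\,\mat{\Omega}_{-m}\mat{R}_{11},\ \ \mat{\Omega}_{-m}\mat{r} + \rho\,\omg_m\,\bigr],
\]
so the $m-1$ deflation columns are right-multiplied by the invertible block $\mat{R}_{11}$, while the reserved final column acquires an admixture of the deflation columns.

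Next I would show the projector is untouched. \LOOF{} builds $\mat{Q}$ as an orthonormal basis for $\mathcal{R}([\mat{A}\mat{\Omega}_{-m},\,\mat{\Omega}_{-m}])$; under the substitution this range becomes $\mathcal{R}([\mat{A}\mat{\Omega}_{-m}\mat{R}_{11},\,\mat{\Omega}_{-m}\mat{R}_{11}])$, which coincides with the original because right-multiplication by the invertible $\mat{R}_{11}$ preserves column spaces. Hence both runs produce the same projector $\mat{Q}\mat{Q}^T$ and the same $\rank(\mat{Q})$, so the first summand $\tr(\mat{Q}^T\mat{A}\mat{Q})=\tr(\mat{A}\mat{Q}\mat{Q}^T)$ is identical.

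The heart of the argument is the deflated vector. The decisive fact is that $\mathcal{R}(\mat{\Omega}_{-m})\subseteq\mathcal{R}(\mat{Q})$, since \LOOF{} includes $\mat{\Omega}_{-m}$ among the columns spanning $\mat{Q}$; therefore $(\mat{I}-\mat{Q}\mat{Q}^T)\mat{\Omega}_{-m}=\mat{0}$. Projecting the new reserved column then gives
\[
\bmu' = (\mat{I}-\mat{Q}\mat{Q}^T)\bigl(\mat{\Omega}_{-m}\mat{r} + \rho\,\omg_m\bigr) = \rho\,(\mat{I}-\mat{Q}\mat{Q}^T)\omg_m = \rho\,\bmu.
\]
The $\mat{r}$-contamination is annihilated and $\bmu'$ is a nonzero scalar multiple of $\bmu$. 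Because $\rank(\mat{Q})$ is unchanged, the normalization step divides out $\rho$ up to its sign, giving $\bnu'=\pm\bnu$ and hence $\bnu'^{T}\mat{A}\bnu' = \bnu^{T}\mat{A}\bnu$. Summing the two unchanged terms yields $\LOOF_{\mat{A}}(\mat{\Omega}\mat{R})=\LOOF_{\mat{A}}(\mat{\Omega})$.

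Finally I would pinpoint why \LOO{} fails. There $\mat{Q}$ is a basis for $\mathcal{R}(\mat{A}\mat{\Omega}_{-m})$ alone, which need not contain $\mathcal{R}(\mat{\Omega}_{-m})$, so $(\mat{I}-\mat{Q}\mat{Q}^T)\mat{\Omega}_{-m}\neq\mat{0}$ in general. The $\mat{r}$-term then survives the projection, $\bmu'$ is no longer proportional to $\bmu$, and the estimator changes; exhibiting a small explicit counterexample would confirm the stated inequality. The only delicate bookkeeping is the normalization: one must note that the single surviving scalar $\rho$ rescales both $\bmu$ and $\|\bmu\|_2$, so it cancels from $\bnu'$ except for a sign that is irrelevant inside the quadratic form $\bnu'^{T}\mat{A}\bnu'$.
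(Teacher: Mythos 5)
Your proof is correct, and it reaches the same conclusion as the paper by a more explicit, computation-by-computation route. The paper's proof is more compact: it observes that \LOOF{} effectively computes the unpivoted QR factorization $[\mat{A}\mat{\Omega}_{-m},\,\mat{\Omega}] = [\mat{Q},\,\bmu]\widehat{\mat{R}}$, notes that the substitution $\mat{\Omega}\mapsto\mat{\Omega}\mat{R}$ amounts to right-multiplying this concatenated matrix by an invertible (block) triangular matrix, and concludes that neither $\mathcal{R}(\mat{Q})$ nor $\mathcal{R}(\bmu)$ can change. Your argument unpacks exactly what that invariance is hiding: the decisive identity $(\mat{I}-\mat{Q}\mat{Q}^T)\mat{\Omega}_{-m}=\mat{0}$, which holds because \LOOF{} (unlike \LOO{}) includes $\mat{\Omega}_{-m}$ in the deflation space, so the admixture $\mat{\Omega}_{-m}\mat{r}$ in the transformed last column is annihilated and $\bmu'=\rho\,\bmu$. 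You also handle a detail the paper glosses over, namely that the scalar $\rho$ cancels in the normalization up to a sign that is irrelevant inside the quadratic form. What the paper's framing buys is brevity and a structural explanation (QR factorizations are invariant under triangular right-multiplication); what yours buys is transparency about \emph{why} the mechanism works and, as a bonus, it makes the failure of \LOO{} immediate for the same reason ($\mathcal{R}(\mat{A}\mat{\Omega}_{-m})$ need not contain $\mathcal{R}(\mat{\Omega}_{-m})$, so the contamination survives). Like the paper, you stop short of an explicit counterexample for the \LOO{} claim, so the two proofs are at the same level of rigor on that point.
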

\begin{proof}
    \cref{alg:leave_one_out_full} effectively computes the unpivoted QR factorization 
    \begin{equation*}
        [\mat{A}\mat{\Omega}_{-m},\,\mat{\Omega}] = [\mat{Q},\,\bmu]\widehat{\mat{R}}.
    \end{equation*}
    The transformation $\mat{\Omega}\mapsto \mat{\Omega}\mat{R}$ does not alter either $\mathcal{R}(\mat{Q})$ or $\mathcal{R}(\bmu)$, and so the output of \cref{alg:leave_one_out_full} is unchanged. The same cannot be said of \cref{alg:leave_one_out} because 
    \begin{equation*}
        \mathcal{R}([\mat{A}\mat{\Omega}_{-m},\,\mat{\Omega}\mat{e}_n]) \neq \mathcal{R}([\mat{A}\mat{\Omega}_{-m},\,\mat{\Omega}\mat{r}])
    \end{equation*}
    for general nonzero vectors $\mat{r}$. 
\end{proof}

We are now set up to provide our main result about \XTF{}. 
\begin{theorem}\label{thm:xtrace_full_basis_invariance}
    If $\mat{\Omega}\in \mathbb{R}^{N\times m}$ has i.i.d.~$\mathcal{N}(0,1)$ entries and $\mat{Q}$ is any orthonormal basis for $\mathcal{R}(\mat{\Omega})$, then
    \begin{align*}
        \mathbb{E}[\XTF_{\mat{A}}(\mat{\Omega})|\mathcal{R}(\mat{\Omega})] &= \mathbb{E}[\LOOF_{\mat{A}}(\mat{Q}\mat{U})|\mat{Q}] \\
        &= \mathbb{E}[\XTF_{\mat{A}}(\mat{Q}\mat{U})|\mat{Q}],
    \end{align*}
    for random orthogonal $\mat{U}$. The value of $\LOOF_{\mat{A}}(\mat{Q}\mat{U})$ depends only on $\mat{Q}$ and the last column of $\mat{U}$. 
\end{theorem}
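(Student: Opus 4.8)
The plan is to first distill \cref{lemma:loo_r_invariant,lemma:loo_u_last_column} into a single statement describing how $\LOOF$ responds to right-multiplication of its argument by an arbitrary invertible matrix, and then to run a rotation-invariance argument on the one scalar direction that survives. Concretely, fix an orthonormal $\mat{Q}$ and let $\mat{M}\in\mathbb{R}^{m\times m}$ be invertible with QR factorization $\mat{M}=\mat{W}\mat{S}$, $\mat{W}$ orthogonal and $\mat{S}$ upper triangular. Since $\mat{S}$ is in particular block upper triangular with block sizes $(m-1,1)$, \cref{lemma:loo_r_invariant} gives $\LOOF_{\mat{A}}(\mat{Q}\mat{M})=\LOOF_{\mat{A}}(\mat{Q}\mat{W}\mat{S})=\LOOF_{\mat{A}}(\mat{Q}\mat{W})$, and \cref{lemma:loo_u_last_column} (applied with test matrix $\mat{Q}$) shows this depends only on $\mat{Q}$ and the last column $\mat{w}_m$ of $\mat{W}$. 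Taking $\mat{M}=\mat{U}$ orthogonal already yields the final sentence of the theorem, since then $\mat{w}_m$ is literally the last column of $\mat{U}$.

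The crux is the following distributional fact. Writing $\mat{w}_m(\mat{M})$ for the last column of the orthogonal factor under the positive-diagonal QR convention, I would observe that $\mat{w}_m$ is rotation-equivariant: for any fixed orthogonal $\mat{V}$, the identity $\mat{V}\mat{M}=(\mat{V}\mat{W})\mat{S}$ is again a positive-diagonal QR factorization, so $\mat{w}_m(\mat{V}\mat{M})=\mat{V}\,\mat{w}_m(\mat{M})$. Consequently, if $\mat{M}$ is almost surely invertible and left-rotation invariant in law (i.e.\ $\mat{V}\mat{M}\stackrel{d}{=}\mat{M}$ for every orthogonal $\mat{V}$), then $\mat{w}_m(\mat{M})\stackrel{d}{=}\mat{V}\,\mat{w}_m(\mat{M})$ for all such $\mat{V}$, forcing $\mat{w}_m(\mat{M})$ to be uniform on the unit sphere $S^{m-1}$. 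The last column of a Haar orthogonal $\mat{U}$ is likewise uniform on $S^{m-1}$. Defining, for the fixed $\mat{Q}$, the map $g(\mat{v}):=\LOOF_{\mat{A}}(\mat{Q}\mat{V})$ for any orthogonal $\mat{V}$ with last column $\mat{v}$ — well-defined by \cref{lemma:loo_u_last_column} — the previous paragraph gives $\LOOF_{\mat{A}}(\mat{Q}\mat{M})=g(\mat{w}_m(\mat{M}))$ and $\LOOF_{\mat{A}}(\mat{Q}\mat{U})=g(\mat{u}_m)$, whence $\mathbb{E}[\LOOF_{\mat{A}}(\mat{Q}\mat{M})]=\mathbb{E}_{\mat{U}}[\LOOF_{\mat{A}}(\mat{Q}\mat{U})]$ for every such $\mat{M}$. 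This single lemma is what the rest of the proof leans on.

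With that in hand the theorem assembles quickly. By \cref{lemma:gaussian_qr}, conditioning on $\mathcal{R}(\mat{\Omega})$ is conditioning on $\mat{Q}$ and $\mat{\Omega}\stackrel{d}{=}\mat{Q}\mat{U}\mat{R}$ with $\mat{U},\mat{R}$ independent, so $\mathbb{E}[\XTF_{\mat{A}}(\mat{\Omega})\mid\mathcal{R}(\mat{\Omega})]=\mathbb{E}_{\mat{U},\mat{R}}[\XTF_{\mat{A}}(\mat{Q}\mat{U}\mat{R})]$. Expanding $\XTF$ as the permutation average $\tfrac{1}{m!}\sum_{\mat{\Pi}}\LOOF_{\mat{A}}(\,\cdot\,\mat{\Pi})$ and applying the crux lemma to each $\mat{M}=\mat{U}\mat{R}\mat{\Pi}$ — which is almost surely invertible and left-rotation invariant because $\mat{U}$ is Haar, so $\mat{V}\mat{U}\mat{R}\mat{\Pi}\stackrel{d}{=}\mat{U}\mat{R}\mat{\Pi}$ — collapses every summand to $\mathbb{E}_{\mat{U}}[\LOOF_{\mat{A}}(\mat{Q}\mat{U})]$, giving the first claimed equality. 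Taking $\mat{R}=\mat{I}$ in the same expansion gives the second equality $\mathbb{E}_{\mat{U}}[\XTF_{\mat{A}}(\mat{Q}\mat{U})]=\mathbb{E}_{\mat{U}}[\LOOF_{\mat{A}}(\mat{Q}\mat{U})]$. Finally I would note that the answer is independent of the choice of basis $\mat{Q}$: replacing $\mat{Q}$ by $\mat{Q}\mat{O}$ for orthogonal $\mat{O}$ leaves $\mathbb{E}_{\mat{U}}[\LOOF_{\mat{A}}(\mat{Q}\mat{O}\mat{U})]$ unchanged since $\mat{O}\mat{U}\stackrel{d}{=}\mat{U}$, which justifies the phrase ``any orthonormal basis.''

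I expect the main obstacle to be the joint treatment of the factor $\mat{R}$ and the column permutation $\mat{\Pi}$: the product $\mat{R}\mat{\Pi}$ is no longer triangular, so \cref{lemma:loo_r_invariant} cannot be applied to it directly, and one might worry that the $\mat{R}$-dependence of $\XTF_{\mat{A}}(\mat{Q}\mat{U}\mat{R})$ persists in the conditional expectation (as it does for \XTRACE{}). The resolution — re-orthogonalizing the entire product $\mat{U}\mat{R}\mat{\Pi}$ and retaining only the last column of its $\mat{Q}$-factor, whose law is pinned to the uniform distribution on the sphere purely by the Haar factor $\mat{U}$ — is precisely where the extra orthogonalization against $\mat{\Omega}_{-i}$ in \XTF{} (versus \XTRACE{}) earns its keep, through the block-triangular invariance that \LOO{} lacks. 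A secondary point to get right is the QR sign convention, so that $\mat{w}_m$ is a well-defined, measurable, rotation-equivariant map; this is harmless, since any fixed convention suffices and $\LOOF$ is insensitive to the sign of $\mat{w}_m$.
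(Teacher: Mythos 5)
Your proof is correct, but it resolves the central difficulty by a genuinely different mechanism than the paper. The paper's proof disposes of the permutation average \emph{before} the QR factorization ever enters: since the columns of $\mat{\Omega}$ are exchangeable conditional on $\mathcal{R}(\mat{\Omega})$, it writes $\mathbb{E}[\XTF_{\mat{A}}(\mat{\Omega})|\mathcal{R}(\mat{\Omega})] = \mathbb{E}[\LOOF_{\mat{A}}(\mat{\Omega})|\mathcal{R}(\mat{\Omega})]$ at the outset, then substitutes $\mat{\Omega} \stackrel{d}{=} \mat{Q}\mat{U}\mat{R}$ via \cref{lemma:gaussian_qr} and eliminates $\mat{R}$ \emph{pathwise} with \cref{lemma:loo_r_invariant}, since with the permutation already gone the triangular structure is intact; the second equality follows in two lines from $\mat{U}\mat{\Pi} \stackrel{d}{=} \mat{U}$. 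You instead substitute $\mat{Q}\mat{U}\mat{R}$ first and then confront the non-triangular products $\mat{U}\mat{R}\mat{\Pi}$ head-on, which is exactly the obstacle you flag in your last paragraph and which the paper never meets. Your resolution --- re-factor $\mat{M}=\mat{U}\mat{R}\mat{\Pi}=\mat{W}\mat{S}$, reduce $\LOOF_{\mat{A}}(\mat{Q}\mat{M})$ to a function $g$ of the last column of $\mat{W}$ via \cref{lemma:loo_r_invariant,lemma:loo_u_last_column}, and pin the law of that column to the uniform distribution on the sphere by rotation equivariance of QR plus Haar invariance of $\mat{U}$ --- is sound, including the independence argument $\mat{V}\mat{U}\mat{R}\mat{\Pi}\stackrel{d}{=}\mat{U}\mat{R}\mat{\Pi}$. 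The trade-off: the paper's route is shorter because conditional exchangeability sidesteps the problem entirely, while your ``crux lemma'' is a more general, reusable fact ($\mathbb{E}[\LOOF_{\mat{A}}(\mat{Q}\mat{M})] = \mathbb{E}[\LOOF_{\mat{A}}(\mat{Q}\mat{U})]$ for \emph{any} a.s.-invertible random $\mat{M}$ with left-rotation-invariant law), makes explicit precisely where the Haar factor earns its keep, and additionally justifies the phrase ``any orthonormal basis $\mat{Q}$,'' a point the paper leaves implicit.
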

\begin{proof}
    Let $\mat{\Pi}$ be a random permutation. Use, in order, the fact that $\mat{\Omega}$ and $\mat{\Omega}\mat{\Pi}$ have the same distribution conditional on $\mathcal{R}(\mat{\Omega})$, \cref{lemma:gaussian_qr}, and \cref{lemma:loo_r_invariant}:
    \begin{align*}
        \mathbb{E}[\XTF_{\mat{A}}(\mat{\Omega})|\mathcal{R}(\mat{\Omega})] &=
        \mathbb{E}[\mathbb{E}[\LOOF_{\mat{A}}(\mat{\Omega}\mat{\Pi})|\mat{\Omega}]|\mathcal{R}(\mat{\Omega})] \\
        &= \mathbb{E}[\LOOF_{\mat{A}}(\mat{\Omega})|\mathcal{R}(\mat{\Omega})] \\
        &= \mathbb{E}[\LOOF_{\mat{A}}(\mat{Q}\mat{U}\mat{R})|\mat{Q}] \\
        &= \mathbb{E}[\LOOF_{\mat{A}}(\mat{Q}\mat{U})|\mat{Q}].
    \end{align*}
    The second equality follows from the fact that $\mat{U}$ and $\mat{U}\mat{\Pi}$ have the same distribution:
    \begin{align*}
        \mathbb{E}[\XTF_{\mat{A}}(\mat{Q}\mat{U})|\mat{Q}]
        &= \mathbb{E}[\mathbb{E}[\LOOF_{\mat{A}}(\mat{Q}\mat{U}\mat{\Pi})|\mat{Q},\mat{U}]|\mat{Q}] \\
        &= \mathbb{E}[\LOOF_{\mat{A}}(\mat{Q}\mat{U})|\mat{Q}].
    \end{align*}
    The final claim is just restating the result of \cref{lemma:loo_u_last_column}.
\end{proof}
Thus for $\LOOF$, as for the plain Girard-Hutchinson estimator in \cref{thm:hutch_basis_invariance}, the expected output conditional on $\mathcal{R}(\mat{\Omega})$ can be expressed in a form that makes no reference to the triangular factor $\mat{R}$. 

\phantomsection
\subsection{Sampling strategies}\label{sec:sampling_strategies}
To compute $\mathbb{E}[\XTF_{\mat{A}}(\mat{Q}\mat{U})| \mat{Q}]$ or estimate it, there are a few options worth considering:
\begin{itemize}
    \item Express $\mathbb{E}[\LOO_{\mat{A}}(\mat{Q}\mat{U})|\mat{Q}]$ as an integral over the sphere $\mathcal{S}^{m-1}$ and compute it via quadrature. This is feasible if the number of test vectors $m$ is very small (say, $2\leq m \leq 4$).
    \item Approximate $\mathbb{E}[\LOO_{\mat{A}}(\mat{Q}\mat{U})|\mat{Q}]$ by sampling random unit vectors $\mat{u}$. This will come at a marginal cost of $\mathcal{O}(m^2)$ operations per sample.
    \item Approximate $\mathbb{E}[\XTF_{\mat{A}}(\mat{Q}\mat{U})| \mat{Q}]$ by sampling random orthogonal matrices $\mat{U}$. It is plausible that orthonormal vectors will yield negatively correlated estimates, which could make up for the added orthogonalization costs compared to sampling vectors independently.
    \item Approximate $\mathbb{E}[\XTF_{\mat{A}}(\mat{Q}\mat{U})| \mat{Q}]$ by starting with $\mat{U}=\mat{I}_m$ and transforming it using a sequence of random Givens rotations (Kac's random walk) or Householder reflections. 
\end{itemize}

Implemented properly, the computational cost of any of these methods will be independent of the dimension $N$ of $\mat{A}$.

\begin{algorithm}[ht]
\caption{LeaveOneOut}\label{alg:leave_one_out}
    \fontsize{10}{14}\selectfont
\begin{algorithmic}[1]
\REQUIRE{Matrix $\mat{A}\in \mathbb{R}^{N\times N}$; test vectors $\mat{\Omega}\in \mathbb{R}^{N\times m}$}
\ENSURE{Approximation to $\tr(\mat{A})$}
\STATE{Compute an orthonormal basis $\mat{Q}$ for $\mat{A}\mat{\Omega}_{-m}$}\hfill\COMMENT{Remove last column of $\mat{Y}$}
\STATE{$\bmu = (\mat{I}-\mat{Q}\mat{Q}^T)\mat{\Omega}\mat{e}_m$}
\STATE{$\bnu = \sqrt{N-\rank(\mat{Q})}\cdot\bmu/\|\bmu\|_2$}\hfill\COMMENT{Normalization}
\RETURN{$\tr(\mat{Q}^T\mat{A}\mat{Q}) + \bnu^T\mat{A}\bnu$}
\end{algorithmic}
\end{algorithm}

\begin{algorithm}[ht]
\caption{LeaveOneOutFull}\label{alg:leave_one_out_full}
    \fontsize{10}{14}\selectfont
\begin{algorithmic}[1]
\REQUIRE{Matrix $\mat{A}\in \mathbb{R}^{N\times N}$; test vectors $\mat{\Omega}\in \mathbb{R}^{N\times m}$}
\ENSURE{Approximation to $\tr(\mat{A})$}
\STATE{Compute an orthonormal basis $\mat{Q}$ for $[\mat{A}\mat{\Omega}_{-m},\,\mat{\Omega}_{-m}]$}
\STATE{$\bmu = (\mat{I}-\mat{Q}\mat{Q}^T)\mat{\Omega}\mat{e}_m$}
\STATE{$\bnu = \sqrt{N-\rank(\mat{Q})}\cdot\bmu/\|\bmu\|_2$}\hfill\COMMENT{Normalization}
\RETURN{$\tr(\mat{Q}^T\mat{A}\mat{Q}) + \bnu^T\mat{A}\bnu$}
\end{algorithmic}
\end{algorithm}

\section{Practical implementation} \label{sec:practical_implementation}
This section will cover how to compute the value of $\LOO_{\mat{A}}(\mat{Q}\mat{U})$ given the last column $\mat{u}$ of $\mat{U}$. The extension to evaluating $\XTF_{\mat{A}}(\mat{Q}\mat{U})$ given a whole matrix $\mat{U}$ is straightforward. 

First, compute the QR factorization
\begin{equation*}
    \begin{bmatrix}
        \mat{\Omega} & \mat{A}\mat{\Omega}
    \end{bmatrix} = \begin{bmatrix}
        \mat{Q}_0 & \mat{Q}_1
    \end{bmatrix}\begin{bmatrix}
        \mat{R}_0 & \mat{M} \\ & \mat{R}_1
    \end{bmatrix},
\end{equation*}
and right-multiply both block columns by $\mat{R}_0^{-1}$ to get the same factorization over a different set of starting vectors,
\begin{equation} \label{eqn:qr_on_orth_basis}
    \begin{bmatrix}
        \mat{Q}_0 & \mat{A}\mat{Q}_0
    \end{bmatrix} = \underbrace{\begin{bmatrix}
        \mat{Q}_0 & \mat{Q}_1
    \end{bmatrix}}_{\mat{Q}}\underbrace{\begin{bmatrix}
        \mat{I}_m & \mat{M}\mat{R}_0^{-1} \\ & \mat{R}_1\mat{R}_0^{-1}
    \end{bmatrix}}_{\mat{R}}.
\end{equation}
Given a vector $\mat{u}$, we'd like to cheaply find an orthonormal basis for the reduced matrix
\begin{equation*}
    \begin{bmatrix}
        \mat{Q}_0(\mat{I}-\mat{u}\mat{u}^\dagger) & \mat{A}\mat{Q}_0(\mat{I}-\mat{u}\mat{u}^\dagger)
    \end{bmatrix}
    = \mat{Q}\mat{R}\left(\mat{I}_{2m} - \begin{bmatrix}
        \mat{u} & \\
        & \mat{u}
    \end{bmatrix}\begin{bmatrix}
        \mat{u} & \\
        & \mat{u}
    \end{bmatrix}^\dagger\right)
\end{equation*}
The two vectors removed from the basis can be expressed in the form $\mat{Q}\mat{S}$ for some $\mat{S}\in \mathbb{R}^{2m\times 2}$, and must satisfy 
\begin{equation*}
    \mat{Q}\mat{S}\,\bot\, \mat{Q}\mat{R}\left(\mat{I}_{2m} - \begin{bmatrix}
        \mat{u} & \\
        & \mat{u}
    \end{bmatrix}\begin{bmatrix}
        \mat{u} & \\
        & \mat{u}
    \end{bmatrix}^\dagger\right)
\end{equation*}
One solution is given by 
\begin{equation*}
    \mat{S} = \mat{R}^{-T}\begin{bmatrix}
        \mat{u} & \\ & \mat{u}
    \end{bmatrix}.
\end{equation*}
Specifically, the first and second columns are the vectors that would be left out by removing $\mat{Q}_0\mat{u}$ and $\mat{A}\mat{Q}_0\mat{u}$, respectively, from $\begin{bmatrix}
    \mat{Q}_0 & \mat{A}\mat{Q}_0
\end{bmatrix}$.

The result $\mat{S}$ does not in general have orthogonal columns, so find the factorization $\mat{S} = \widetilde{\mat{S}}\mat{L}$, where $\mat{L}$ is {\em lower} triangular. Why lower triangular? Because in order to find $\bmu$ in \cref{alg:leave_one_out_full}, we effectively want the unpivoted QR factorization
\begin{equation*}
    \begin{bmatrix}
        \mat{Q}_0\mat{U}_{\bot} & \mat{A}\mat{Q}_0\mat{U}_\bot & \mat{Q}_0\mat{u} & \mat{A}\mat{Q}_0\mat{u}
    \end{bmatrix}
    =
    \begin{bmatrix}
        \qsubu & \bmu & \mat{Q}\tilde{\mat{s}}_2
    \end{bmatrix},
\end{equation*}
where $[\mat{U}_{\bot},\,\mat{u}]$ is orthogonal. The final column $\mat{Q}\tilde{\mat{s}}_2$ is the result of orthogonalizing $\mat{A}\mat{Q}_0\mat{u}$ against all the other columns, and so will be parallel to $\mat{Q}\mat{s}_2$. But we don't want to orthogonalize $\mat{Q}_0\mat{u}$ against {\em all} other columns, just the ones that form $\mat{Q}_{(\mat{u})}$. So to find $\bmu$ (parallel to $\mat{Q}\tilde{\mat{s}}_1$), we have to orthogonalize $\mat{s}_1$ against $\tilde{\mat{s}}_2$. 

Now to compute the trace estimate. As a preprocessing step, define
\begin{equation}
    \mat{H} := \mat{Q}^T\mat{A}\mat{Q},
\end{equation}
noting that the first $m$ columns of $\mat{H}$ were already computed in \cref{eqn:qr_on_orth_basis}. We then find that
\begin{align*}
    \widehat{\tr}_{\mat{u}} &= \tr(\qsubu^T\mat{A}\qsubu) + \bnu^T\mat{A}\bnu \\
    &= \tr\left(\mat{A}\mat{Q}(\mat{I}-\widetilde{\mat{S}}\widetilde{\mat{S}}^T)\mat{Q}^T\right) + (N - \rank(\mat{Q}))\cdot \tilde{\mat{s}}_1^T\mat{Q}^T\mat{A}\mat{Q}\tilde{\mat{s}}_1 \\
    &= \tr(\mat{H}) - \tr\left(\widetilde{\mat{S}}^T\mat{H}\widetilde{\mat{S}}\right)
    + (N - \rank(\mat{Q}))\cdot \tilde{\mat{s}}_1^T\mat{H}\tilde{\mat{s}}_1 \\
    &= \tr(\mat{H}) - \tilde{\mat{s}}_2^T\mat{H}\tilde{\mat{s}}_2
    + (N - \rank(\mat{Q})- 1)\cdot \tilde{\mat{s}}_1^T\mat{H}\tilde{\mat{s}}_1.
\end{align*}
The end result is presented as \cref{alg:xtrace_full_sampling}, where the outer loop averages the result over multiple random rotations $\mat{\Omega}\mapsto\mat{\Omega}\mat{U}$. As with \XTRACE{}, the loops can be eliminated through vectorization, unless the samples are being computed iteratively via Givens rotations or Householder reflections as mentioned in \cref{sec:sampling_strategies}. In the latter case, only the inner loop can be eliminated.

\begin{algorithm}[ht]
    \caption{XTrace-Full: Efficient implementation}\label{alg:xtrace_full_sampling}
    \fontsize{10}{14}\selectfont
\begin{algorithmic}[1]
\REQUIRE{Matrix $\mat{A}\in \mathbb{R}^{N\times N}$; number of test vectors $m$, resampling number $k$}
\ENSURE{Approximation to $\tr(\mat{A})$}
\STATE{Sample $\mat{\Omega}\in\mathbb{R}^{N\times m}$ with i.i.d.~$\mathcal{N}(0,1)$ entries}
\STATE{Compute an orthonormal basis $\mat{Q}_0$ for $\mat{\Omega}$}
\STATE{QR factorization $[\mat{Q}_0,\,\mat{A}\mat{Q}_0] = \mat{Q}\mat{R}$}\hfill\COMMENT{See \cref{eqn:qr_on_orth_basis}}
\STATE{$\mat{H} = \mat{Q}^T\mat{A}\mat{Q}$}\hfill\COMMENT{Entries overlap with $\mat{R}$}
\FOR{$j=1,2,\ldots,k$}
\STATE{Sample random orthogonal $\mat{U} \in \mathbb{R}^{m\times m}$}\hfill\COMMENT{$\mathcal{O}(m^3)$ operations}
\FOR{$i=1,2,\ldots,m$}
\STATE{$\mat{u} = \mat{U}\mat{e}_i$}
\STATE{Compute $\mat{S} = \mat{R}^{-T}(\mat{I}_2\otimes \mat{u})$}\hfill\COMMENT{$\mathcal{O}(m^2)$ operations}
\STATE{QL factorization $\mat{S} = \widetilde{\mat{S}}\mat{L}$}\hfill\COMMENT{$\mathcal{O}(m)$ operations}
\STATE{$\widehat{\tr}_{ij} = \tr(\mat{H}) - \tilde{\mat{s}}_2^T\mat{H}\tilde{\mat{s}}_2
    + (N - \rank(\mat{Q})- 1)\cdot \tilde{\mat{s}}_1^T\mat{H}\tilde{\mat{s}}_1$}\hfill\COMMENT{$\mathcal{O}(m^2)$ operations}
\ENDFOR
\ENDFOR
\RETURN{$\frac{1}{mk}\sum_{j=1}^{k}\sum_{i=1}^m\widehat{\tr}_{ij}$}
\end{algorithmic}
\end{algorithm}

\section{Numerical experiments} \label{sec:numerical_experiments}
This section presents a comparison of the estimators \XTRACE{} and \XTF{} (both with and without resampling) on a variety of synthetic examples. In all cases we fix the matrix dimension $N=1000$, sample $\mat{\Omega}$ with i.i.d.~$\mathcal{N}(0,1)$ entries, and report the root-mean-squared relative error over 1000 trials. For each trial, each estimator will use the same set of test vectors $\mat{\Omega}$. The variant of \XTF{} with resampling will right-multiply $\mat{\Omega}$ by $25$ randomly sampled orthogonal matrices, the first of which will always be the identity matrix. Code for \XTF{} is given in \cref{listing:xtrace_code}, and the experiments were run using Octave 6.2.0. 

We use diagonal matrices with six choices for the eigenvalues $\bm{\lambda}$:
\begin{itemize}
    \item \texttt{flat}: $\bm{\lambda} = (3-2(i-1)/(N-1):i=1,2,\ldots,N)$;
    \item \texttt{poly}: $\bm{\lambda} = (i^{-2}:i=1,2,\ldots,N)$;
    \item \texttt{inv-poly}: $\bm{\lambda} = (2-i^{-2}:i=1,2,\ldots,N)$;
    \item \texttt{exp}: $\bm{\lambda} = (0.7^i:i=0,1,\ldots,N-1)$;
    \item \texttt{step}: $\bm{\lambda} = (\underbrace{1,\ldots,1}_{\text{50 times}},\underbrace{10^{-3},\ldots,10^{-3}}_{N-50\text{ times}})$;
    \item \texttt{step-decay}: $\bm{\lambda} = (\underbrace{1,\ldots,1}_{\text{50 times}}, 51^{-2},\ldots,N^{-2})$.
\end{itemize}

Results are presented in \cref{fig:synthetic_results}. For four of the six examples, the results are largely the same for all three methods. For the other two, \texttt{inv-poly} and \texttt{step}, \XTF{} performs significantly better than \XTRACE{}. In the first case, it is because the outlying eigenvalues are the ones {\em closer} to zero, which the Krylov space $[\mat{\Omega}, \mat{A}\mat{\Omega}]$ can detect significantly better than the space $\mat{A}\mat{\Omega}$ can. The second case is due to the lagging eigenvalues being precisely equal to one another, in which case a sufficiently large Krylov space captures the leading eigenvalues perfectly. When the lagging values are modified to decay slowly (example \texttt{step-decay}), the difference between the methods vanishes.

\begin{figure}
    \centering
    \begin{minipage}{0.48\textwidth}
        \centering
        \includegraphics[width=\textwidth]{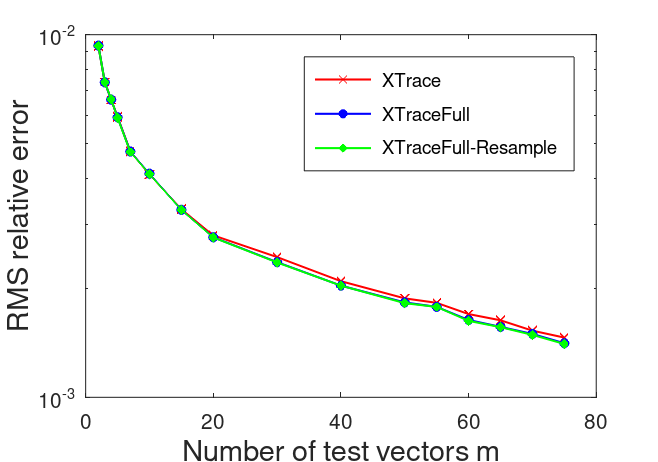}
        \caption*{(a) \texttt{flat}}
    \end{minipage}\hfill
    \begin{minipage}{0.48\textwidth}
        \centering
        \includegraphics[width=\textwidth]{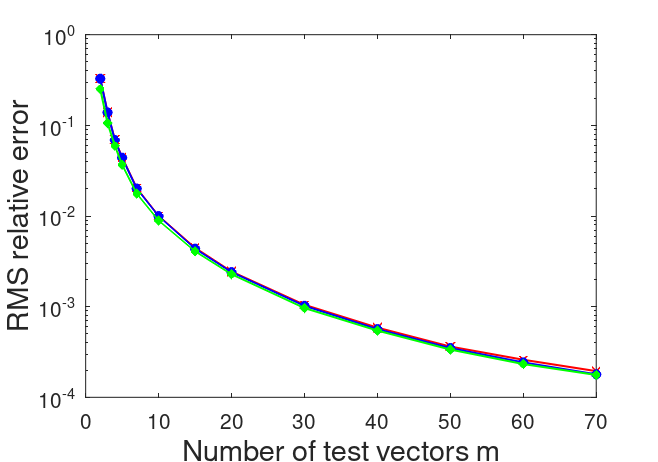}
        \caption*{(a) \texttt{poly}}
    \end{minipage}
    \begin{minipage}{0.48\textwidth}
        \centering
        \includegraphics[width=\textwidth]{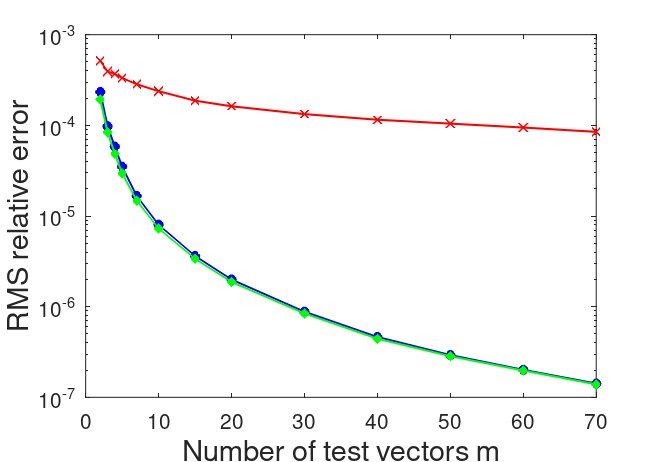} 
        \caption*{(a) \texttt{inv-poly}}
    \end{minipage}\hfill
    \begin{minipage}{0.48\textwidth}
        \centering
        \includegraphics[width=\textwidth]{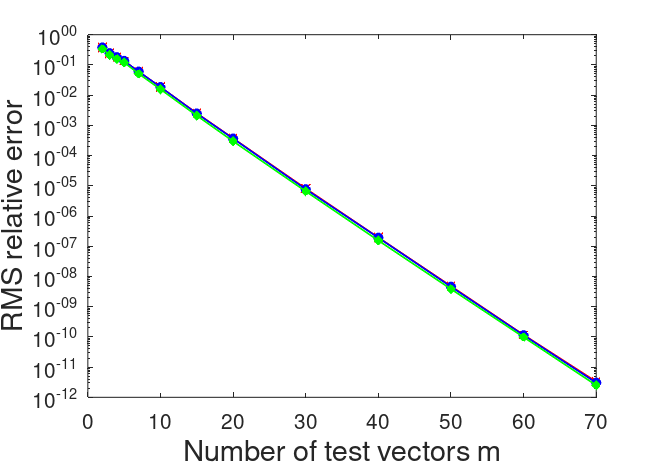}
        \caption*{(a) \texttt{exp}}
    \end{minipage}
    \begin{minipage}{0.48\textwidth}
        \centering
        \includegraphics[width=\textwidth]{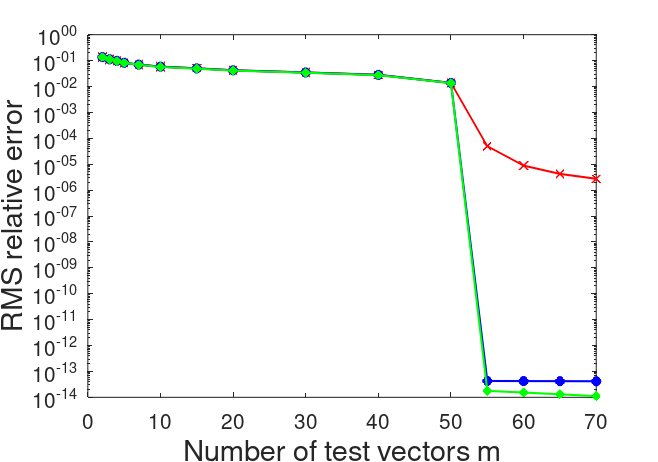} 
        \caption*{(a) \texttt{step}}
    \end{minipage}\hfill
    \begin{minipage}{0.48\textwidth}
        \centering
        \includegraphics[width=\textwidth]{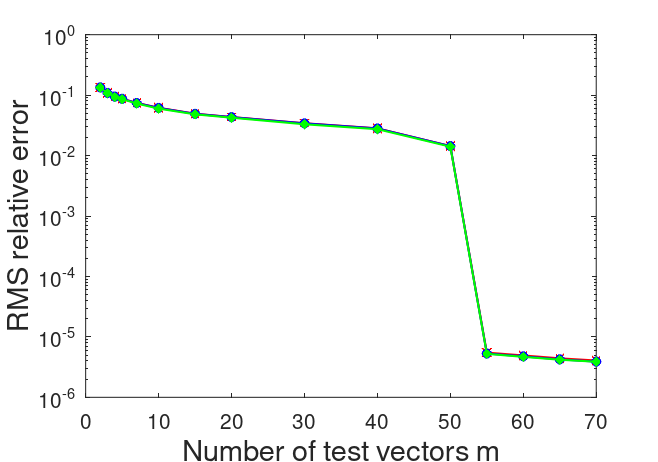}
        \caption*{(a) \texttt{step-decay}}
    \end{minipage}
    \caption{Method \XTF{} performed largely the same as \XTRACE{}, except in cases where the full Krylov space could capture the spectrum of $\mat{A}$ notably better than the power method.}
    \label{fig:synthetic_results}
\end{figure}

\cref{fig:synthetic_ratios} presents a closer view of the effect of resampling $\mat{\Omega}\mat{U}$ for random orthogonal matrices $\mat{U}$. In both cases shown, \texttt{poly} and \texttt{exp}, the benefit of resampling is small but nonzero. For the case \texttt{poly} in particular, the benefit appears when the number of test vectors $m$ is very small, but diminishes as $m$ increases. 

\begin{figure}
    \centering
    \begin{minipage}{0.48\textwidth}
        \centering
        \includegraphics[width=\textwidth]{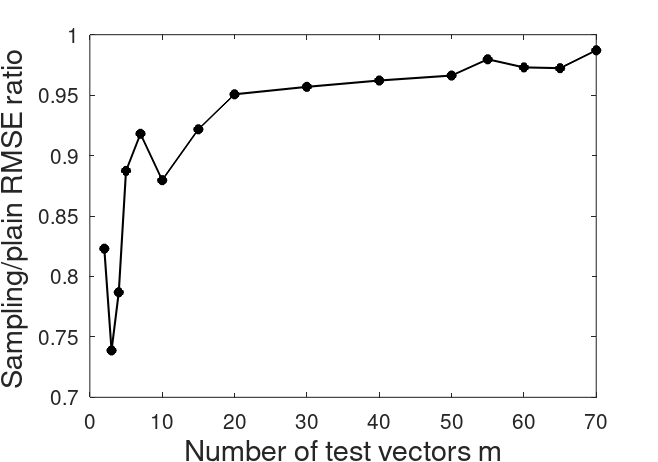}
        \caption*{(a) \texttt{poly}}
    \end{minipage}\hfill
    \begin{minipage}{0.48\textwidth}
        \centering
        \includegraphics[width=\textwidth]{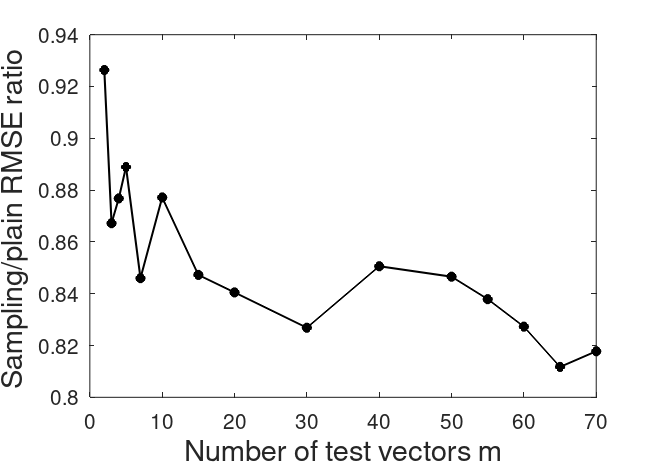}
        \caption*{(a) \texttt{exp}}
    \end{minipage}
    \caption{Resampling by orthogonal rotations $\mat{\Omega}\mat{U}$ reduces the variance in some cases, but not by much.}
    \label{fig:synthetic_ratios}
\end{figure}

\section{Conclusion and future work} \label{sec:concluding_remarks}
We have proposed two changes to \XTRACE{} that will reduce its variance without adding bias or requiring any additional matrix-vector products. In practice, the benefits are minor except for matrices with certain distributions of eigenvalues. 

The techniques outlined in \cref{sec:practical_implementation} could potentially be extended to cases where we deflate vectors over a larger Krylov space $[\mat{\Omega}, \mat{A}\mat{\Omega},\ldots,\mat{A}^{q-1}\mat{\Omega}]$. This can be useful in cases where we want to estimate the trace of $f(\mat{A})$ and matvecs with $f(\mat{A})$ are approximated by matrix polynomials, e.g., representing the number of triangles in a graph as $\tr(\mat{A}^3)$.

\begin{figure}
\begin{lstlisting}[language=Octave, caption=Matlab code for XTraceFull with resampling, label=listing:xtrace_code]
function est = xtraceFull(A, Omg, k)
  % A: the matrix whose trace is to be estimated
  % Omg: test vectors
  % k: number of different rotations of Omega to sample (default k=1)
  
  if nargin < 3
    k = 1;
  endif
  [N, m] = size(Omg);
  
  cnormc = @(M) M ./ vecnorm(M, 2, 1);
  diag_prod = @(A, B) sum(conj(A).*B, 1);
  
  [Q, R] = qr([Omg, A*Omg], 0);
  
  R(:,m+1:end) = R(:,m+1:end)/R(1:m,1:m);
  R(1:m,1:m) = eye(m);
  
  H = [R(:,m+1:end), Q'*(A*Q(:,m+1:end))];
  UU = eye(2*m);
  
  for j = 1:k
    S = cnormc(R'\UU);
    S(:,1:m) = S(:,1:m) - S(:,m+1:end) .* diag_prod(S(:,m+1:end), S(:,1:m));
    S(:,1:m) = cnormc(S(:,1:m));

    dSHS = diag_prod(S, H*S); % Vector of length m+m
  
    ests(j,:) = trace(H)*ones(1,m) - dSHS(m+1:end) + (N-2*m+1)*dSHS(1:m);
    
    UU = kron(eye(2), orth(randn(m)));
  endfor
  
  est = mean(ests(:));  
end
\end{lstlisting} 
\end{figure}

\bibliography{references}
\bibliographystyle{abbrv}

\end{document}